\documentclass[oneside]{amsart}

\usepackage[letterpaper,body={15.0cm,22.5cm}, mag=1000]{geometry}
\usepackage{amssymb}
\usepackage{amsthm}
\usepackage{amscd}

\theoremstyle{plain}
\newtheorem*{Thma}{Theorem A}
\newtheorem*{Thmb}{Theorem B}
\newtheorem{Thm}{Theorem}[section]
\newtheorem{Lem}[Thm]{Lemma}
\newtheorem{Cor}[Thm]{Corollary}
\theoremstyle{definition}
\newtheorem{Rem}[Thm]{Remark}

\DeclareMathOperator{\cs}{cs}
\DeclareMathOperator{\cd}{cd}
\DeclareMathOperator{\Irr}{Irr}
\DeclareMathOperator{\Cl}{C\ell} 
\DeclareMathOperator{\crk}{crk}

\begin{document}

\baselineskip 15pt

\title[ Some bounds on commutativity degree]
	{Some bounds on relative commutativity degree}

\author[R. K. Nath and M. K. Yadav ]
	{Rajat K. Nath and Manoj K. Yadav}

\address{
School of Mathematics\\
Harish-Chandra Research Institute\\
Chhatnag Road, Jhunsi\\
Allahabad-211 019, U. P., India.} 
\email{rajatkantinath@yahoo.com and myadav@hri.res.in}

\begin{abstract}
 The relative commutativity degree of a subgroup $H$ of a finite group $G$, denoted by $\Pr(H, G)$, is the probability that an element of $G$ commutes with an element of $H$. In this article we obtain some lower and upper bounds for $\Pr(H, G)$ and their consequences. We also study an invariance property of $\Pr(H, G)$ and its generalizations, under isoclinism of pairs of groups.
\end{abstract}

\subjclass[2000]{Primary 20D60; Secondry 20P05}
\keywords{commutativity degree, conjugacy class size, isoclinic pairs of groups}
 
\maketitle
\section{Introduction} \label{S:intro}
Let $H$  be a subgroup of a finite group $G$. Let $Z(G)$ denote the center of $G$ and define $Z(H, G) := \{h \in H : hg = gh \, \forall \, g \in G\}  = Z(G) \cap H$. The commutator $xyx^{-1}y^{-1}$ of any two elements $x, y \in G$ is denoted by $[x, y]$. By $K(G, H)$, we denote the set $\{[x, y] : x \in G, y \in H\}$. The subgroup generated by $K(G, H)$ is denoted by $[G, H]$. Notice that $[G, G]$ is the commutator subgroup of $G$, which is also denoted by $G'$. For any element $x \in G$, by $\Cl_G(x)$ and $C_G(x)$ we denote the conjugacy class of $x$ in $G$ and centralizer of $x$ in $G$ respectively. By $K(G)$ we denote $K(G, G)$.  The relative commutativity degree of  $H$, denoted by $\Pr(H, G)$, is the probability that an element of $G$ commutes with an element of $H$. This notion has been introduced and studied in \cite{ERL07}. It is clear that if $H = G$, then $\Pr(H, G)$ coincides with $\Pr(G)$ which is known as the commutativity degree or commuting probability of $G$ (see \cite{GR06, wG73, L95, ND10, RE09}). Also, $\Pr(H, G) = 1$ if and only if $Z(H, G) = H$.

Importance of studying lower bound for $\Pr(G)$ goes back to 1973, where W. H. Gustafson \cite{wG73} emphasized on getting this bound for an arbitrary finite group. Such a bound has been studied, under certain conditions on the group, in \cite[Theorem 3.5]{ERL07} and
\cite[Corollary 2.3]{mP04}. We obtain a better lower bound for $\Pr(G)$ as a corollary (Corollary \ref{cor:Thm1}) of the following theorem, which we prove in Section 3.

\begin{Thma}\label{Thma}
Let $H$ be  a  subgroup of a finite group $G$. Then
\[
\Pr(H, G) \geq \frac{1}{|K(G, H)|}\left(1 + \frac{|K(G, H)| - 1}{|H:Z(H, G)|}\right).
\]
In particular, if $Z(H, G) \ne H$ then $\Pr(H, G) > \frac{1}{|K(G, H)|}$. 
\end{Thma}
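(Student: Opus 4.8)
The plan is to start from a counting formula for $\Pr(H,G)$ and then control the conjugacy class sizes $|\Cl_G(h)|$ using the set $K(G,H)$. First I would rewrite the relative commutativity degree as a count of commuting pairs,
\[
\Pr(H,G) = \frac{|\{(h,g) \in H \times G : hg = gh\}|}{|H|\,|G|} = \frac{1}{|H|\,|G|}\sum_{h \in H}|C_G(h)|,
\]
and then use $|C_G(h)| = |G|/|\Cl_G(h)|$ to obtain the cleaner expression
\[
\Pr(H,G) = \frac{1}{|H|}\sum_{h \in H}\frac{1}{|\Cl_G(h)|},
\]
which is the form I would work with throughout.

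The key observation is a uniform upper bound on the class sizes. For a fixed $h \in H$, the right translate $\Cl_G(h)h^{-1} = \{\,ghg^{-1}h^{-1} : g \in G\,\} = \{\,[g,h] : g \in G\,\}$ is entirely contained in $K(G,H)$, and since right multiplication by $h^{-1}$ is a bijection we get $|\Cl_G(h)| = |\Cl_G(h)h^{-1}| \le |K(G,H)|$. I expect this translation trick to be the crux of the whole argument: it is the only place where the set $K(G,H)$ enters, and everything after it is bookkeeping. Separately I would record the elementary fact that $h \in Z(H,G)$ precisely when $\Cl_G(h) = \{h\}$, i.e.\ when the corresponding summand equals $1$.

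Next I would split the sum over $H = Z(H,G) \sqcup (H \setminus Z(H,G))$. The central part contributes $|Z(H,G)|$ summands each equal to $1$, while on the remaining part I apply $|\Cl_G(h)| \le |K(G,H)|$ to bound each term below by $1/|K(G,H)|$. Writing $k = |K(G,H)|$ and $m = |H:Z(H,G)|$, this gives
\[
\Pr(H,G) \ge \frac{1}{|H|}\left(|Z(H,G)| + \frac{|H| - |Z(H,G)|}{k}\right) = \frac{1}{m} + \frac{m-1}{mk} = \frac{1}{k}\left(1 + \frac{k-1}{m}\right),
\]
which is exactly the claimed bound after a routine simplification (the three expressions all equal $\frac{k+m-1}{mk}$).

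Finally, for the ``in particular'' clause I would argue that $Z(H,G) \ne H$ forces some $h \in H$ with $|\Cl_G(h)| \ge 2$, so $K(G,H)$ contains a nontrivial commutator $[g,h] \ne 1$ in addition to the identity $1 = [1,h]$, whence $k \ge 2$. Then the surplus term $\frac{k-1}{mk}$ is strictly positive, so the bound strictly exceeds $\frac{1}{k}$, giving $\Pr(H,G) > \frac{1}{|K(G,H)|}$. I anticipate no serious obstacle beyond verifying carefully that $\Cl_G(h)h^{-1}$ really lands inside $K(G,H)$ and that the closing algebraic identity matches the stated form.
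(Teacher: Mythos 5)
Your proposal is correct and follows essentially the same route as the paper: the identity $\Pr(H,G)=\frac{1}{|H|}\sum_{h\in H}\frac{1}{|\Cl_G(h)|}$ is the paper's Lemma 3.1 at $g=1$, and your translation observation $\Cl_G(h)h^{-1}\subseteq K(G,H)$ is exactly the paper's inequality $\Cl_G(x)\subseteq K(G,H)x$, after which the split over $Z(H,G)$ and the algebra coincide. The argument for the strict inequality in the ``in particular'' clause is also sound.
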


Following \cite{DN10}, for $g \in G$, we define ${\Pr}_g(H, G)$ to be $\frac{|\{(x, y) \in H \times G : xyx^{-1}y^{-1} = g\}|}{|H||G|}$.
Notice that if $H = G$, then ${\Pr}_g(G) := {\Pr}_g(G, G)$ measures the probability that the commutator of two group elements is equal to a given element, which is studied extensively in \cite{mP04} by M. R. Pournaki and R. Sobhani.  In \cite[Proposition 3.1]{mP04}, ${\Pr}_g(G)$ is computed for groups $G$ with  $|G'| = p$, a prime (not necessarily the smallest one), and $G' \leq Z(G)$, using character theory of finite groups. In the following theorem, which we prove in Section 3, we generalize this result and prove it without using character theory.

\begin{Thmb} \label{Thm3.2}
Let $H$ be a  subgroup of a finite nilpotent group $G$. If $|[G, H]| = p$, a prime (not necessarily the smallest one), and $g \in [G, H]$ then 
\[
{\Pr}_g(H, G) = \begin{cases} \frac{1}{p}\left(1 + \frac{p - 1}{|H : Z(H, G)|}\right) \text{ if } g = 1\\
\frac{1}{p}\left(1 - \frac{1}{|H : Z(H, G)|}\right) \text{ if } g \ne 1.
\end{cases}
\]
\end{Thmb}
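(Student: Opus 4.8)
The plan is to reduce the entire computation to the single structural fact that $[G, H] \leq Z(G)$, after which the count becomes an exercise in counting fibers of a homomorphism. First I would record that $[G, H]$ is normal in $\langle G, H\rangle = G$; this is a standard consequence of commutator calculus (e.g.\ from $[x, y]^{y'} = [x, y']^{-1}[x, yy']$, every conjugate of a generator $[x, y]$ by an element of $G$ or of $H$ again lies in $[G, H]$). Now I would invoke nilpotency of $G$: every nontrivial normal subgroup of a finite nilpotent group meets the center nontrivially, so $[G, H] \cap Z(G) \neq 1$. Since $|[G, H]| = p$ is prime, the subgroup $[G, H] \cap Z(G)$ must be all of $[G, H]$, giving $[G, H] \leq Z(G)$. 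This is the one place nilpotency is used, and it is where the hypothesis genuinely bites: for non-nilpotent $G$ (e.g.\ $G = S_3$, $H = \langle(12)\rangle$, where $[G, H] = A_3$ has order $3$) centrality fails, so I expect this reduction to be the main obstacle; everything afterward is bookkeeping.

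With $[G, H] \leq Z(G)$ in hand, I would exploit bilinearity of the commutator map. Since the values $[x, y]$ are central, a direct expansion gives $[x, yy'] = [x, y][x, y']$ for all $y, y' \in G$, so for each fixed $x \in H$ the map $\varphi_x \colon G \to [G, H]$, $y \mapsto [x, y]$, is a group homomorphism with kernel $C_G(x)$. There are two cases. If $x \in Z(H, G) = H \cap Z(G)$ then $\varphi_x$ is trivial, so $[x, y] = 1$ for every $y$. If $x \in H \setminus Z(H, G)$ then $x \notin Z(G)$, so $\varphi_x$ is nontrivial; as its image is a nontrivial subgroup of the group $[G, H]$ of prime order $p$, it is all of $[G, H]$, whence $|G : C_G(x)| = p$ and every fiber of $\varphi_x$ has exactly $|G|/p$ elements.

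Finally I would assemble the count $N_g := |\{(x, y) \in H \times G : [x, y] = g\}|$ by splitting on these two cases. The $x \in Z(H, G)$ contribute $|Z(H, G)|\cdot|G|$ pairs when $g = 1$ and none when $g \neq 1$; each of the $|H| - |Z(H, G)|$ remaining $x$ contributes exactly $|G|/p$ pairs for every value $g \in [G, H]$ (including $g = 1$, whose fiber is $C_G(x)$). Hence $N_1 = |Z(H, G)|\,|G| + (|H| - |Z(H, G)|)\,|G|/p$ and, for $g \neq 1$, $N_g = (|H| - |Z(H, G)|)\,|G|/p$. Dividing by $|H||G|$ and writing $n = |H : Z(H, G)|$ gives $\Pr_g(H, G) = \tfrac{1}{n} + \tfrac{1}{p}(1 - \tfrac{1}{n})$ when $g = 1$ and $\Pr_g(H, G) = \tfrac{1}{p}(1 - \tfrac{1}{n})$ when $g \neq 1$; a one-line rearrangement turns the former into $\tfrac{1}{p}(1 + \tfrac{p-1}{n})$, matching the claimed formula. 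Specializing $H = G$ recovers the Pournaki--Sobhani computation of $\Pr_g(G)$, now obtained without character theory.
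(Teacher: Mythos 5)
Your proof is correct, but it takes a genuinely different route from the paper's. The paper exploits nilpotency through the Sylow decomposition: it writes $G = P \times A$ and $H = Q \times B$ with $P, Q$ the Sylow $p$-subgroups, observes that $|[G,H]| = p$ forces $[A,B]=1$ and $[G,H]=[P,Q]$, reduces to ${\Pr}_g(H,G) = {\Pr}_g(Q,P)$, and then invokes its Lemma 3.5 (valid whenever $p$ is the \emph{smallest} prime dividing the group order, as it automatically is for the $p$-group $P$), which itself rests on the conjugacy-class formula of Lemma 3.1 and the containment $\Cl_G(x) \subseteq K(G,H)x$. You instead extract from nilpotency the single fact that the prime-order normal subgroup $[G,H]$ meets $Z(G)$ nontrivially and hence is central, after which $\varphi_x\colon y \mapsto [x,y]$ is a homomorphism onto $[G,H]$ with kernel $C_G(x)$ for each $x \in H \setminus Z(H,G)$, and the result follows by counting fibers. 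Your argument is self-contained (it bypasses Lemmas 3.1, 3.5 and Corollary 3.6 entirely) and it isolates the real hypothesis: the conclusion holds for any finite $G$ with $|[G,H]| = p$ and $[G,H] \le Z(G)$, nilpotency serving only to guarantee centrality; this directly generalizes the Pournaki--Sobhani setting $|G'| = p$, $G' \le Z(G)$. What the paper's route buys in exchange is that its intermediate Lemma 3.5 covers a case your reduction does not (non-nilpotent $G$ with $p$ the smallest prime dividing $|G|$, where $[G,H]$ need not be central). One cosmetic caveat: the commutator identity you quote for normality of $[G,H]$ in $G$ should be adjusted to the paper's convention $[x,y] = xyx^{-1}y^{-1}$ (e.g.\ ${}^g[x,y] = [gx,y][g,y]^{-1}$ does the job), but the fact itself is standard and the rest of your computation is exactly right.
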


In Section 2, following \cite{SSK07}, we define isoclinism on the class of all pairs of groups $(G, H)$, where $H$ is any subgroup of $G$. Then we show that $\Pr_g(H, G)$ is invariant under isoclinism of pairs of groups.  
In Section 4, we obtain lower and upper bounds for $\Pr(H, G)$ in terms of largest and smallest class sizes of elements of $H - Z(H, G)$ repectively.  In the last section we observe that for a finite group $G$ with given information on the conjugacy class sizes,  the information about $\Pr(G)$ can be used to say something regarding the solvability and supersolvability of $G$.

\section{Isoclinic pairs of groups}

In 1940, P. Hall \cite{pH40} introduced the following concept of isoclinism on the class of all groups:

Let $X$ be a finite group and $\bar{X} = X/Z(X)$. 
Then commutation in $X$ gives a well defined map
$a_{X} : \bar{X} \times \bar{X} \to X'$ such that
$a_{X}(xZ(X), yZ(X)) = [x,y]$ for $(x,y) \in X \times X$.
Two finite groups $G$ and $H$ are called \emph{isoclinic} if 
there exists an  isomorphism $\alpha$ of the factor group
$\bar G = G/Z(G)$ onto $\bar{H} = H/Z(H)$, and an isomorphism $\beta$ of
the subgroup $G'$ onto  $H'$
such that the following diagram is commutative
\[
 \begin{CD}
   \bar G \times \bar G  @>a_G>> G'\\
   @V{\alpha\times\alpha}VV        @VV{\beta}V\\
   \bar H \times \bar H @>a_H>> H'.
  \end{CD}
\]
The resulting pair $(\alpha, \beta)$ is called an \emph{isoclinism} of $G$ 
onto $H$. Notice that isoclinism is an equivalence relation among finite 
groups.

In 2008, M. R. Pournaki and R. Sobhani \cite{mP04} proved that if $(\alpha, \beta)$ is an isoclinism of $G$ onto $H$ and $g \in G'$, then ${\Pr}_g(G) = \Pr_{\beta(g)}(H)$. Recently, Salemkar et. al \cite{SSK07}  have introduced the concept of isoclinism on the class of all pairs of groups $(G, H)$, where $H$ is a normal subgroup of $G$. Notice that isoclinism on pairs of groups $(G, H)$ can be defined for  any subgroup $H$ of $G$ in the following way:

Let $X$ be a finite group and $Y$ be a subgroup of $X$. Let $\bar{Y} = Y/Z(Y, X)$ and $\bar{X} = X/Z(Y, X)$. Then the map $a_{(Y, X)}: \bar{Y} \times \bar{X} \to [Y, X]$ defined by 
$a_{(Y, X)}(yZ(Y, X), xZ(Y, X)) = [y, x]$, is  well defined. 

Let $G_1$ and $G_2$ be two groups with subgroups $H_1$ and $H_2$ respectively. A pair of groups $(G_1, H_1)$ is said to be isoclinic to a pair of groups $(G_2, H_2)$ if there exists an isomorphism $\alpha$ from $\bar{G_1} = G_1/Z(H_1, G_1)$ onto $\bar{G_2} = G_2/Z(H_2, G_2)$ such that $\alpha (H_1/Z(H_1, G_1)) = H_2/Z(H_2, G_2)$, and an isomorphism $\beta : [H_1, G_1] \to [H_2, G_2]$, such that the following diagram commutes
\begin{equation}\label{diag5}
 \begin{CD}
   \bar{H_1} \times \bar{G_1}  @>a_{(H_1, G_1)} >> [H_1, G_1]\\
   @V{\alpha\times\alpha}VV           @VV{\beta}V\\
   \bar{H_2} \times \bar{G_2} @>a_{(H_2, G_2)} >> [H_2, G_2],
  \end{CD}
\end{equation} 
where $\bar{H_1} = H_1/Z(H_1, G_1)$ and $\bar{H_2} = H_2/Z(H_2, G_2)$.
Notice that isoclinism is an equivalence relation among pairs of finite 
groups.
 
\vspace{.2in}

The following result extends  \cite[Theorem 3.3]{STM10} and  \cite[Lemma 3.5]{mP04}.
\begin{Thm}\label{Thmiso}
Let $(G_1, H_1)$ and $(G_2, H_2)$ be two pairs of groups and $(\alpha, \beta)$ be an isoclinism from $(G_1, H_1)$ to $(G_2, H_2)$. If $g \in [H_1, G_1]$ then $\Pr_g(H_1, G_1) = \Pr_{\beta(g)}(H_2, G_2)$. 
\end{Thm}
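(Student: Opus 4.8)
The plan is to reduce the computation of $\Pr_g(H, G)$ to a count carried out entirely in the quotient groups $\bar{H} = H/Z(H, G)$ and $\bar{G} = G/Z(H, G)$, where the isoclinism data $(\alpha, \beta)$ applies directly. First I would observe that for $(x, y) \in H \times G$ the commutator $[x, y]$ depends only on the images $\bar{x} \in \bar{H}$ and $\bar{y} \in \bar{G}$; this is exactly the well-definedness of the map $a_{(H, G)}$ recorded in the excerpt, and it holds because any element of $Z(H, G) = Z(G) \cap H$ is central in $G$ and is therefore absorbed by the commutator in either coordinate. Consequently, over each pair of cosets the fiber of the commutator map has constant size $|Z(H, G)|^2$, so writing $N(g) := |a_{(H, G)}^{-1}(g)|$ for the number of pairs $(\bar{x}, \bar{y}) \in \bar{H} \times \bar{G}$ with $a_{(H, G)}(\bar{x}, \bar{y}) = g$, I would obtain
\[
|\{(x, y) \in H \times G : [x, y] = g\}| = |Z(H, G)|^2 \, N(g).
\]

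Dividing by $|H| \, |G| = |\bar{H}| \, |\bar{G}| \, |Z(H, G)|^2$, the central factors cancel and I would arrive at the clean identity $\Pr_g(H, G) = N(g) / (|\bar{H}| \, |\bar{G}|)$, valid for both pairs. The heart of the argument is then to compare $N_1(g)$ for $(G_1, H_1)$ with $N_2(\beta(g))$ for $(G_2, H_2)$. Here I would use that $\alpha : \bar{G_1} \to \bar{G_2}$ is an isomorphism carrying $\bar{H_1}$ onto $\bar{H_2}$, so that $(\bar{x}, \bar{y}) \mapsto (\alpha(\bar{x}), \alpha(\bar{y}))$ is a bijection $\bar{H_1} \times \bar{G_1} \to \bar{H_2} \times \bar{G_2}$. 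The commutativity of diagram \eqref{diag5} gives $\beta(a_{(H_1, G_1)}(\bar{x}, \bar{y})) = a_{(H_2, G_2)}(\alpha(\bar{x}), \alpha(\bar{y}))$, and since $\beta$ is a bijection this restricts to a bijection between $a_{(H_1, G_1)}^{-1}(g)$ and $a_{(H_2, G_2)}^{-1}(\beta(g))$. Hence $N_1(g) = N_2(\beta(g))$.

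Finally, because $\alpha$ is an isomorphism with $\alpha(\bar{H_1}) = \bar{H_2}$, we have $|\bar{G_1}| = |\bar{G_2}|$ and $|\bar{H_1}| = |\bar{H_2}|$, so the denominators agree and
\[
\Pr_g(H_1, G_1) = \frac{N_1(g)}{|\bar{H_1}| \, |\bar{G_1}|} = \frac{N_2(\beta(g))}{|\bar{H_2}| \, |\bar{G_2}|} = \Pr_{\beta(g)}(H_2, G_2),
\]
as desired. I expect the only delicate point to be the bookkeeping in the first step, namely verifying that the commutator fibers split uniformly over the $Z(H, G)$-cosets in both the $H$ and the $G$ coordinates, since everything afterward is a formal transport of structure along $(\alpha, \beta)$. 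Once the reduction $\Pr_g(H, G) = N(g)/(|\bar{H}| \, |\bar{G}|)$ is in place, the invariance follows immediately from the definition of isoclinism of pairs.
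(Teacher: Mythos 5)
Your proof is correct and follows essentially the same route as the paper's: your $N(g)$ is exactly the paper's $|S_g|$, the factorization $|\{(x,y): [x,y]=g\}| = |Z(H,G)|^2 N(g)$ is the paper's appeal to the well-definedness of $a_{(H,G)}$, and the transport of fibers along $(\alpha,\beta)$ via the commuting diagram is identical. No gaps.
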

\begin{proof}
Let us set $Z_1 = Z(H_1, G_1)$ and $Z_2 = Z(H_2, G_2)$. Since $(\alpha, \beta)$ is an isoclinism from $(G_1, H_1)$ to $(G_2, H_2)$, diagram \eqref{diag5} commutes. Let $g \in [H_1, G_1]$ be the given element. Consider the sets $S_g = \{(h_1Z_1, g_1Z_1) \in H_1/Z_1  \times G_1/Z_1 : [h_1, g_1] = g\}$ and $S_{\beta(g)} = \{(h_2Z_2, g_2Z_2) \in H_2/Z_2  \times G_2/Z_2 : [h_2, g_2] = \beta(g)\}$. Since diagram \eqref{diag5} commutes, it follows that $|S_g| = |S_{\beta(g)}|$. Since the map $a_{(H_i, G_i)}$ for $i = 1, \;2$ is well defined, it follows that  $|\{(h_1, g_1) \in H_1 \times G_1 : [h_1, g_1] = g\}| = |Z_1|^2 |S_g|$ and $|\{(h_2, g_2) \in H_2 \times G_2 : [h_2, g_2] = \beta(g)\}| = |Z_2|^2 |S_{\beta(g)}|$. Also, notice that $|H_1 : Z_1| = |H_2 : Z_2|$ and $|G_1 : Z_1| = |G_2 : Z_2|$. Hence
\[
{\Pr}_g(H_1, G_1) = \frac{|S_g|}{|H_1 : Z_1| |G_1 : Z_1|} = \frac{|S_{\beta(g)}|}{|H_2 : Z_2| |G_2 : Z_2|}  = {\Pr}_{\beta(g)}(H_2, G_2).
\]
This completes the proof. 
\end{proof}

The concept of isoclinism can be generalised in the following way:

Let $H$ be a finite group and $H_1, H_2, \dots, H_{m+1}$ be any $m+1$ subgroups of $H$. Let $Z_m(H_i)$ denote the subgroup $H_i \cap Z_m(H)$ for $1 \le i \le m+1$, where $Z_m(H)$ is the $m$th term in the upper central series of $H$. Set $\times_{i=1}^{m+1}H_i = H_1 \times H_2 \times \cdots \times H_{m+1}$.
It follows from \cite[IV, 7.6]{BT82} that the map $a_{(H_1, H_2, \dots, H_{m+1})}$ from 
$\times_{i=1}^{m+1}\frac{H_i}{Z_m(H_i)}$ onto $[H_1, H_2, \dots, H_{m+1}] = [\cdots[ H_1, H_2], \dots, H_{m+1}]$ defined by 
\[a_{(H_1, H_2, \dots, H_{m+1})}(h_1Z_m(H_1), h_2Z_m(H_2), \dots, h_{m+1}Z_m(H_{m+1})) = [h_1, h_2, \dots, h_{m+1}]\]
is well defined.

Let $K$ another group and $K_1, K_2, \dots, K_{m+1}$ be any $m+1$ subgroups of $K$. Again set $Z_m(K_i) = K_i \cap Z_m(K)$. Then $(H_1, H_2, \dots, H_{m+1})$ is said to be $(m+1)$-tuple isoclinic to $(K_1, K_2, \dots, K_{m+1})$ if there exist an isomorphism $\alpha$ from $\frac{H}{Z_m(H)}$ onto $\frac{K}{Z_m(K)}$ such that $\alpha_i(\frac{H_i}{Z_m(H_i)}) = \frac{K_i}{Z_m(K_i)}$, where $\alpha_i = \alpha|_{\frac{H_i}{Z_m(H_i)}}$ for $1 \le i \le m+1$, and an isomorphism $\beta$ from $[H_1, H_2, \dots, H_{m+1}]$ onto $[K_1, K_2, \dots, K_{m+1}]$ such that the following diagram commutes
\begin{equation*}
 \begin{CD}
   \times_{i=1}^{m+1} \frac{H_i}{Z_m(H_i)}  @>a_{(H_1, H_2, \dots, H_{m+1})} >> [H_1, H_2, \dots, H_{m+1}]\\
   @V{\times_{i=1}^{m+1}\alpha_i}VV           @VV{\beta}V\\
   \times_{i=1}^{m+1} \frac{K_i}{Z_m(K_i)} @> a_{(K_1, K_2, \dots, K_{m+1})} >> [K_1, K_2, \dots, K_{m+1}].
  \end{CD}
\end{equation*} 
The pair $(\alpha, \beta)$ is called an $(m+1)$-tuple isoclinism of $(H_1, H_2, \dots, H_{m+1})$ onto $(K_1, K_2, \dots, K_{m+1})$.

Let $g \in [H_1, H_2, \dots, H_{m+1}]$. Then we can define
\[{\Pr}_g(H_1, H_2, \dots, H_{m+1}) = \frac{1}{\prod_{i=1}^{m+1}|H_i|} |\{(h_1, h_2, \dots, h_{m+1}) \in \times_{i=1}^{m+1}{H_i} : [h_1, h_2, \dots, h_{m+1}] = g\}|.\]

\begin{Rem}
Let $(\alpha, \beta)$ be an $(m+1)$-tuple isoclinism of $(H_1, H_2, \dots, H_{m+1})$ onto $(K_1, K_2, \dots,$ $ K_{m+1})$.
 Then using arguments as in the proof of Theorem \ref{Thmiso}, one can prove that 
\[{\Pr}_g(H_1, H_2, \dots, H_{m+1}) =  {\Pr}_{\beta(g)}(K_1, K_2, \dots, K_{m+1}).\]
\end{Rem}

\section{ Lower bounds for $\Pr(H, G)$} \label{bound}

Notice that $yxy^{-1} = yxy^{-1}x^{-1}x \in K(G, H)x$ for all $y \in G$ and  $x \in H$.
So, it follows that 
\begin{equation} \label{eq1}
 \Cl_G(x)  \subseteq  K(G, H)x 
\end{equation}
for all $x \in H$.

The following lemma can be  derived from \cite[Theorem 2.3]{DN10}. However, for completeness, we give a slightly modified proof here.

\begin{Lem}\label{eq2}
Let $H$ be a  subgroup of a finite group $G$ and $g \in G$. Then
\[
 {\Pr}_g(H, G) = \frac{1}{|H|}\underset{g^{-1}x \in \Cl_G(x)}{\sum_{x \in H}}\frac{1}{|\Cl_G(x)|}.
\]
\end{Lem}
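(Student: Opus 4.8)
The plan is to compute the numerator $N_g := |\{(x,y) \in H \times G : xyx^{-1}y^{-1} = g\}|$ appearing in the definition ${\Pr}_g(H,G) = N_g/(|H||G|)$ by fixing the first coordinate and counting the admissible second coordinates. So first I would fix $x \in H$ and rewrite the commutator equation $xyx^{-1}y^{-1} = g$ in conjugation form. Left-multiplying by $x^{-1}$ gives $yx^{-1}y^{-1} = x^{-1}g$, and inverting both sides yields the equivalent condition $yxy^{-1} = g^{-1}x$. Hence, for a fixed $x$, the set of $y \in G$ with $[x,y] = g$ is exactly the set of $y$ that conjugate $x$ to the element $g^{-1}x$.

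Second, I would analyze this conjugation condition. The element $g^{-1}x$ is a conjugate of $x$ if and only if $g^{-1}x \in \Cl_G(x)$; when this fails, there is no admissible $y$ and $x$ contributes $0$ to $N_g$. When $g^{-1}x \in \Cl_G(x)$, fixing one solution $y_0$ shows that an arbitrary $y$ is a solution if and only if $y \in y_0\,C_G(x)$, so the solution set is a single left coset of the centralizer and therefore has $|C_G(x)| = |G|/|\Cl_G(x)|$ elements by the orbit--stabilizer relation. Summing over $x \in H$, I get $N_g = \sum_{x \in H,\; g^{-1}x \in \Cl_G(x)} |G|/|\Cl_G(x)|$, and dividing by $|H||G|$ cancels the factor $|G|$ to give precisely the asserted formula.

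I expect the only point requiring care to be the passage $[x,y] = g \iff yxy^{-1} = g^{-1}x$, which is a short but order-sensitive manipulation in a nonabelian group, together with the routine verification that the set of conjugating elements forms a single coset of $C_G(x)$. Once these are in place the computation is purely mechanical. As the authors remark, the statement can also be extracted from \cite[Theorem 2.3]{DN10}; the direct count sketched here has the advantage of being self-contained.
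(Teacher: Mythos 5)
Your argument is correct and is essentially the same as the paper's: fix $x \in H$, observe that $[x,y]=g$ is equivalent to $yxy^{-1}=g^{-1}x$, so the set of admissible $y$ is empty unless $g^{-1}x \in \Cl_G(x)$, in which case it is a single left coset of $C_G(x)$ of size $|G|/|\Cl_G(x)|$; summing over $x$ and dividing by $|H||G|$ gives the formula. This matches the paper's decomposition into the sets $T_x = \{y \in G : [x,y]=g\}$ with $T_x = tC_G(x)$ for any fixed $t \in T_x$.
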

\begin{proof}
Notice that $\{(x, y) \in H \times G : [x, y] = g\} = \underset{x \in H}{\cup}(\{x\} \times T_x)$, where $T_x = \{(y \in G : [x, y] = g\}$. Further notice that, for any $x \in H$, the set $T_x$ is non-empty if and only if $g^{-1}x \in \Cl_G(x)$.
Suppose that $T_x$ is non-empty for some $x \in H$. Fix an element $t \in T_x$. It is easy to see that $T_x = tC_G(x)$. Hence
\[
 {\Pr}_g(H, G) = \frac{1}{|H|}\underset{g^{-1}x \in \Cl_G(x)}{\sum_{x \in H}}\frac{1}{|G:C_G(x)|}
\]
and the lemma follows. 
\end{proof}

The following lemma is an easy exercise.

\begin{Lem} \label{newlemma}
 Let $H$ be a subgroup of a finite group $G$. Then
\[
\frac{1}{n}\left(1 + \frac{n - 1}{|H : Z(H, G)|}\right) \geq \frac{1}{m}\left(1 + \frac{m - 1}{|H : Z(H, G)|}\right)
\]
for any two positive integers $m, n$ such that $m \geq n$. If $Z(H, G) \neq H$, then the equality holds if and only if $m = n$.
\end{Lem}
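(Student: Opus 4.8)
The plan is to abbreviate $k = |H : Z(H, G)|$, which is a positive integer, and to study the single-variable function
\[
f(t) = \frac{1}{t}\left(1 + \frac{t-1}{k}\right)
\]
for positive integers $t$. The two sides of the claimed inequality are precisely $f(n)$ and $f(m)$, so the statement reduces to showing that $f$ is non-increasing in $t$, and is strictly decreasing exactly when $k > 1$ (that is, when $Z(H, G) \neq H$).

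First I would rewrite $f$ so that its dependence on $t$ becomes transparent. Expanding and regrouping the terms gives
\[
f(t) = \frac{1}{t} + \frac{1}{k} - \frac{1}{tk} = \frac{1}{k} + \frac{k-1}{k}\cdot\frac{1}{t}.
\]
In this form the coefficient $\frac{k-1}{k}$ is a nonnegative constant, which vanishes exactly when $k = 1$, multiplying the strictly decreasing factor $1/t$; monotonicity is then immediate. To compare the two sides directly I would subtract, obtaining
\[
f(n) - f(m) = \frac{k-1}{k}\left(\frac{1}{n} - \frac{1}{m}\right) = \frac{k-1}{k}\cdot\frac{m-n}{mn}.
\]
Since $m \geq n$ and $m, n, k$ are positive integers, every factor on the right-hand side is nonnegative, whence $f(n) \geq f(m)$, which is the desired inequality.

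For the equality clause, the displayed difference is zero if and only if $(k-1)(m-n) = 0$. The hypothesis $Z(H, G) \neq H$ forces $k \geq 2$, so $k - 1 \neq 0$, and therefore equality holds if and only if $m = n$. There is no real obstacle here—the lemma is an easy exercise—and the only point requiring care is the algebraic regrouping that isolates the factor $1/t$; once $f$ is written as $\frac{1}{k} + \frac{k-1}{k}\cdot\frac{1}{t}$, both the inequality and its equality condition fall out at once.
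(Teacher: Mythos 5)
Your proof is correct; the paper itself gives no argument for this lemma, stating only that it is ``an easy exercise,'' and your rewriting of the bound as $\frac{1}{k} + \frac{k-1}{k}\cdot\frac{1}{t}$ with $k = |H : Z(H,G)|$ is exactly the natural computation the authors had in mind. The equality analysis via the factorization $\frac{k-1}{k}\cdot\frac{m-n}{mn}$ is also complete and correct.
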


Now we are ready to prove Theorem A.

\noindent {\it Proof of Theorem A.} Let $G$ be a group and $H$ be a subgroup of $G$.
Then, by putting $g = 1$ in Lemma \ref{eq2}, we get
\begin{align*}
\Pr(H, G) & = \frac{1}{|H|}\sum_{x \in H}\frac{1}{|\Cl_G(x)|}\\
& = \frac{1}{|H|}\left(|Z(H, G)| + \sum_{x \in H - Z(H, G)}\frac{1}{|\Cl_G(x)|}\right)\\
& \geq  \frac{1}{|H|}\left(|Z(H, G)| +  \frac{|H| - |Z(H, G)|}{|K(G, H)|}\right), \, \text{using   \eqref{eq1}},\\
& = \frac{1}{|K(G, H)|}\left(1 + \frac{|K(G, H)| - 1}{|H:Z(H, G)|}\right).
\end{align*}
This completes the proof.
\hfill $\Box$

\vspace{.3in}

We would like to remark here that Erfanian et. al \cite[Theorem 3.5]{ERL07} proved 
\[
\Pr(H, G) \geq \frac{|Z(H, G)|}{|H|} + \frac{p(|H| - |Z(H, G)|)}{|H||G|},
\]
where $p$ is the smallest prime dividing $|G|$.
Further, Salemkar et. al \cite[Theorem 2.2 (vi)]{STM10} proved 
\[
\Pr(H, G) \geq \frac{1}{|[G, H]|}\left(1 + \frac{|[G, H]| - 1}{|H:Z(H, G)|}\right)
\] 
for any normal subgroup $H$ of $G$ with $[G, H] \leq Z(H, G)$.
 
 However, if $[G, H] \ne G$ and $H \neq Z(H, G)$, then  it can be checked easily that
\[
\frac{1}{|[G, H]|}\left(1 + \frac{|[G, H]| - 1}{|H:Z(H, G)|}\right) \geq \frac{|Z(H, G)|}{|H|} + \frac{p(|H| - |Z(H, G)|)}{|H||G|}
\]
with equality if and only if $|G : [G, H]| = p$.

It follows from Lemma \ref{newlemma} that
\[
\frac{1}{|K(G, H)|}\left(1 + \frac{|K(G, H)| - 1}{|H:Z(H, G)|}\right) \geq \frac{1}{|[G, H]|}\left(1 + \frac{|[G, H]| - 1}{|H:Z(H, G)|}\right)
\]
and equality holds if and only if $K(G, H) = [G, H]$.
{\it This shows that the lower bound obtained in  Theorem A is better than the known bounds mentioned above}.

Examples of groups $G$ such that $|K(G)| < |G'|$ can be found in \cite{KM07}. If we take $H$ as a maximal subgroup of such a group $G$, then $K(G, H)$ is properly contained in $[G, H] = G'$.

Putting $H = G$ in Theorem A and noticing that $Z(G) = Z(G, G)$, we get the following corollary.

\begin{Cor}\label{cor:Thm1}
 If $G$ is a finite group then
\[
 \Pr(G) \geq \frac{1}{|K(G)|}\left(1 + \frac{|K(G)| - 1}{|G : Z(G)|}\right).
\]
 In particular, if $G$ is non-abelian, then $\Pr(G) > \frac{1}{|K(G)|}$.
\end{Cor}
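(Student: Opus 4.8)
The plan is to obtain this statement as an immediate specialization of Theorem A to the case $H = G$. First I would record the three notational identifications that make the substitution legitimate. By definition $Z(G, G) = \{g \in G : gx = xg \ \forall\, x \in G\} = Z(G)$, so that $|G : Z(G, G)| = |G : Z(G)|$; next, $K(G, G) = K(G)$ is exactly the convention fixed in the introduction; and finally $\Pr(G, G) = \Pr(G)$ by the definition of the relative commutativity degree in the case $H = G$. These are all read off directly from the definitions and require no argument.

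With these identifications in hand, I would substitute $H = G$ into the inequality of Theorem A. The right-hand side $\frac{1}{|K(G, H)|}\bigl(1 + \frac{|K(G, H)| - 1}{|H : Z(H, G)|}\bigr)$ becomes $\frac{1}{|K(G)|}\bigl(1 + \frac{|K(G)| - 1}{|G : Z(G)|}\bigr)$, while the left-hand side becomes $\Pr(G)$. This yields the claimed lower bound with no further computation. For the \emph{in particular} clause I would invoke the corresponding refinement of Theorem A, namely that $Z(H, G) \ne H$ forces $\Pr(H, G) > \frac{1}{|K(G, H)|}$. Taking $H = G$, the hypothesis $Z(G, G) \ne G$ is precisely the condition $Z(G) \ne G$, i.e. that $G$ is non-abelian, which therefore gives the strict inequality $\Pr(G) > \frac{1}{|K(G)|}$.

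Since the entire assertion reduces to Theorem A, I do not expect any genuine obstacle. The only point requiring attention is the bookkeeping of the $H = G$ specializations of $Z(H, G)$ and $K(G, H)$, and both are immediate from the definitions; in particular one should note that $Z(H,G)=Z(G)\cap H$ collapses to $Z(G)$ exactly because $H=G$, so the index $|H:Z(H,G)|$ becomes the familiar $|G:Z(G)|$.
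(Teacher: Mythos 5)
Your proposal is correct and coincides with the paper's own derivation: the corollary is stated there as an immediate consequence of Theorem A with $H = G$, using the identification $Z(G, G) = Z(G)$ exactly as you do. Nothing is missing.
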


Pournaki et. al \cite[Corollary 2.3]{mP04} obtained 
\begin{equation}\label{lbd}
 \Pr(G) \geq \frac{1}{|G'|}\left(1 + \frac{|G'| - 1}{|G : Z(G)|}\right)
\end{equation}
if $G$ has only two complex irreducible character degrees. Recently, the first author together with A. K. Das  obtained the same lower bound for $\Pr(G)$ without any restriction on $G$ (see \cite[Theorem 1]{ND10}). {\it However,  using Lemma \ref{newlemma}, it is easy to see that the lower bound obtained in Corollary \ref{cor:Thm1} is better than the bound in equation \eqref{lbd}}.

The following corollary gives  some equivalent necessary and sufficient conditions for equality to hold in Theorem A.

\begin{Cor}\label{Thm2}
Let $H$ be a  subgroup of a finite group $G$. If $Z(H, G) \ne H$, then the following statements are equivalent:
\begin{enumerate}
\item $\Pr(H, G) = \frac{1}{|K(G, H)|}\left(1 + \frac{|K(G, H)| - 1}{|H:Z(H, G)|}\right)$.

\item $\Cl_G(x)  = K(G, H)x$ for all $x \in H - Z(H, G)$.

\item $K(G, H) = \{yxy^{-1}x^{-1} : y \in G\}$ for all $x \in H - Z(H, G)$.
\end{enumerate} 
\end{Cor}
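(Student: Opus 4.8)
The plan is to extract the equivalence directly from the single inequality used in the proof of Theorem A. Recall that in that proof the only estimate was the termwise bound $\frac{1}{|\Cl_G(x)|} \ge \frac{1}{|K(G, H)|}$ for each $x \in H - Z(H, G)$, which follows from the containment $\Cl_G(x) \subseteq K(G, H)x$ of \eqref{eq1} together with $|K(G, H)x| = |K(G, H)|$. Writing
\[
\Pr(H, G) = \frac{1}{|H|}\left(|Z(H, G)| + \sum_{x \in H - Z(H, G)}\frac{1}{|\Cl_G(x)|}\right),
\]
statement (i) is precisely the assertion that the displayed sum attains its lower bound $\frac{|H| - |Z(H, G)|}{|K(G, H)|}$.

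Since the latter sum has exactly $|H| - |Z(H, G)|$ terms, each bounded below by $\frac{1}{|K(G, H)|}$, and since $Z(H, G) \ne H$ guarantees there is at least one term, equality in the sum forces equality in every term, that is $|\Cl_G(x)| = |K(G, H)|$ for all $x \in H - Z(H, G)$. Combining $|\Cl_G(x)| = |K(G, H)| = |K(G, H)x|$ with the inclusion $\Cl_G(x) \subseteq K(G, H)x$ upgrades the inclusion to the equality $\Cl_G(x) = K(G, H)x$. Running this argument in reverse shows that (ii) returns each summand to its minimum value and hence yields (i). This settles (i) $\Leftrightarrow$ (ii).

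For (ii) $\Leftrightarrow$ (iii) I would simply rewrite the set appearing in (iii): since $\{yxy^{-1}x^{-1} : y \in G\} = \{yxy^{-1} : y \in G\}x^{-1} = \Cl_G(x)x^{-1}$, the condition $K(G, H) = \{yxy^{-1}x^{-1} : y \in G\}$ is, after right multiplication by $x$, exactly $K(G, H)x = \Cl_G(x)$. Thus (ii) and (iii) are literally the same family of conditions expressed in two ways.

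The only point requiring slight care is the passage from equality of the full sum to equality of each summand; but as each summand is a positive rational bounded below by the common value $\frac{1}{|K(G, H)|}$, and their number times that value equals the prescribed total, no summand can exceed the bound, so the step is immediate. I expect no genuine obstacle here: the content of the corollary is entirely contained in the inequality already proved for Theorem A, and the role of the hypothesis $Z(H, G) \ne H$ is merely to guarantee that the sum is non-empty, so that the three statements are non-vacuous.
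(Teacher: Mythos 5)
Your proposal is correct and follows essentially the same route as the paper: the paper likewise derives (i) $\Leftrightarrow$ (ii) by noting that equality in the termwise bound from \eqref{eq1} used in the proof of Theorem A forces $\Cl_G(x) = K(G, H)x$ on $H - Z(H, G)$, and treats (ii) $\Leftrightarrow$ (iii) as the immediate translation you describe. You have simply written out the details that the paper leaves to the reader.
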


\begin{proof}
It is easy to see that (ii) and (iii) are equivalent.
The equivalence of (i) and (ii) follows from the proof of Theorem A and equation \eqref{eq1}. 
\end{proof}

A finite group $G$ is said to be a Camina group if $\Cl_G(x) = G'x$ for all $x \in G - G'$.
Let $H = G$ in Corollary \ref{Thm2}, then $G$ has only two conjugacy class sizes, namely $1$ and $|K(G)|$. Now it follows from \cite{nI53} that $G$ is a direct product of a $p$-group and an abelian group. It then follows from \cite{Ishikawa02} that the nilpotency class of $G$ is at most $3$. We claim that in our case the nilpotency class is at most $2$. Supose that the nolpotency class is $3$. Then there exists an element $u \in G' - Z(G)$. Then by (iii) $G' = \langle [G, u] \rangle \le [G, G']$, which is a contradiction. Thus the equivalent conditions in this corollary are satisfied if and only if $G$ is isoclinic to some Camina $p$-group of class $2$, for some prime $p$. 

There are also examples in the case when $H \neq G$. 

\noindent {\bf 1.} Let $G$ be any Camina $p$-groups of class $2$ and $H$ be a maximal subgroup of $G$. Then $G' = [G, H] = K(G, H)$, $Z(H, G) = Z(G)$ and for all $x \in H - Z(G)$,  $\Cl_G(x)  = K(G, H)x$. 

\noindent {\bf 2.} Let $G$ be any Camina $p$-groups of class $3$ and $H = G'$. Then $[G, H] = [H, G] = [G', G] = K(G, H)$, $Z(H, G) = Z(G)$ and for all $x \in H - Z(G)$,  $\Cl_G(x)  = K(G, H)x$. 

\noindent {\bf 3.} Our next example is a finite group of order $p^5$, which is not a Camina $p$-group, $p$ is an odd prime. Let
\begin{align*}
G = &\langle a_1, a_2, b, c_1, c_2 : [a_1, a_2] = b, [a_1, b] = c_1, [a_2, b] = c_2,\\
  & a_1^p = a_2^p = b^p = c_1^p = c_2^p = 1\rangle. 
\end{align*}
It follows from \cite{Ishikawa99} that $Z(G) < G'$, $|Z(G)| = p^2$, $|G'| = p^3$ and $|\Cl_G(x)| = p^2$ for all $x \in G - Z(G)$. Now take $H = G'$. Then $[G, H] = [H, G] = [G', G] = K(G, H)$, $Z(H, G) = Z(G)$ and for all $x \in H - Z(G)$,  $\Cl_G(x)  = K(G, H)x$.  

\vspace{.2in}

Now, we compute ${\Pr}_g(H, G)$, as an application of the above results, for some classes of finite groups. Recall that for any two groups $G_1$ and $G_2$ with subgroups $H_1$ and $H_2$ respectively, we have

\begin{equation}\label{prod}
 \Pr(H_1 \times H_2, G_1 \times G_2) = \Pr(H_1, G_1)\Pr(H_2, G_2).
\end{equation}

\begin{Lem} \label{Thm3.1}
Let $H$ be a  subgroup of a finite group $G$ and $p$ be the smallest prime dividing $|G|$. If $|[G, H]| = p$ and $g \in [G, H]$, then 
\[
{\Pr}_g(H, G) = \begin{cases} \frac{1}{p}\left(1 + \frac{p - 1}{|H : Z(H, G)|}\right) \text{ if } g = 1\\
\frac{1}{p}\left(1 - \frac{1}{|H : Z(H, G)|}\right) \text{ if } g \ne 1.
\end{cases}
\]
\end{Lem}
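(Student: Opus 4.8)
The plan is to reduce the entire statement to an analysis of the conjugacy class sizes of the elements of $H$ and then feed this information into Lemma \ref{eq2}. First I would observe that by \eqref{eq1} we have $\Cl_G(x) \subseteq K(G,H)x \subseteq [G,H]x$ for every $x \in H$ (using $K(G,H) \subseteq [G,H]$), so that $|\Cl_G(x)| \leq |[G,H]x| = p$. Since $|\Cl_G(x)| = |G : C_G(x)|$ divides $|G|$ and $p$ is the smallest prime dividing $|G|$, the only divisors of $|G|$ not exceeding $p$ are $1$ and $p$; hence $|\Cl_G(x)| \in \{1, p\}$ for all $x \in H$. Clearly $|\Cl_G(x)| = 1$ precisely when $x \in Z(G) \cap H = Z(H,G)$, so $|\Cl_G(x)| = p$ exactly for $x \in H - Z(H,G)$.

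The key structural step is that for $x \in H - Z(H,G)$ the inclusion $\Cl_G(x) \subseteq [G,H]x$ is in fact an equality: both sides are finite sets of cardinality $p$, so $\Cl_G(x) = [G,H]x$. This is what lets me decide, for a given $g \in [G,H]$, exactly which $x$ contribute to the sum in Lemma \ref{eq2}: for such $x$ the condition $g^{-1}x \in \Cl_G(x) = [G,H]x$ holds if and only if $g^{-1} \in [G,H]$, which is automatic since $g \in [G,H]$.

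With this in hand the computation splits into the two asserted cases. For $g = 1$ the condition $g^{-1}x = x \in \Cl_G(x)$ holds for every $x \in H$, so Lemma \ref{eq2} gives ${\Pr}_1(H,G) = \frac{1}{|H|}\left(|Z(H,G)| + \frac{|H| - |Z(H,G)|}{p}\right)$, which rearranges to the stated $\frac{1}{p}\left(1 + \frac{p-1}{|H:Z(H,G)|}\right)$; this is of course just $\Pr(H,G)$ and recovers the equality case of Theorem A. For $g \neq 1$, an element $x \in Z(H,G)$ gives $\Cl_G(x) = \{x\}$, whence $g^{-1}x \notin \Cl_G(x)$, so only the elements of $H - Z(H,G)$ survive in the sum; each contributes $1/|\Cl_G(x)| = 1/p$ by the equality above, yielding ${\Pr}_g(H,G) = \frac{1}{|H|}\cdot\frac{|H|-|Z(H,G)|}{p} = \frac{1}{p}\left(1 - \frac{1}{|H:Z(H,G)|}\right)$.

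I expect the only subtle point to be the equality $\Cl_G(x) = [G,H]x$ for non-central $x$: it rests entirely on first extracting the dichotomy $|\Cl_G(x)| \in \{1,p\}$ from the minimality of $p$, after which the containment of two equal-sized sets closes the argument. Everything else is bookkeeping through Lemma \ref{eq2} together with a short rearrangement of fractions.
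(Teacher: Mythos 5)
Your proof is correct and follows essentially the same route as the paper: both arguments hinge on showing that $|\Cl_G(x)|\in\{1,p\}$ (via the containment $\Cl_G(x)\subseteq K(G,H)x=[G,H]x$ together with the minimality of $p$ among the prime divisors of $|G|$), deducing $\Cl_G(x)=[G,H]x$ for $x\in H-Z(H,G)$ by comparing cardinalities, and then evaluating the sum in Lemma \ref{eq2} separately for $g=1$ and $g\neq 1$. The only cosmetic difference is that the paper cites Corollary \ref{Thm2} for the $g=1$ case where you redo the short computation directly.
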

\begin{proof}
Since $|[G, H]| = p$, it follows that $K(G, H) = [G, H]$, and $|\Cl_G(x)| = |[G, H]| = p$ and $\Cl_G(x) =  K(G, H)x$ for all $x \in H - Z(H, G)$. Hence, if $g = 1$ then, by Corollary \ref{Thm2}, we have
\[
{\Pr}_g(H, G) =  \frac{1}{p}\left(1 + \frac{p - 1}{|H : Z(H, G)|}\right). 
\]

Now consider that $g \ne 1$. Since $\Cl_G(x) =  K(G, H)x$ for all $x \in H - Z(H, G)$ and $K(G, H)$ is a subgroup, it follows that $g^{-1}x \in \Cl_G(x)$. Notice that if $x \in Z(H, G)$ and $g^{-1}x \in \Cl_G(x)$, then $g = 1$. Thus by  Lemma \ref{eq2}, we have 
\begin{align*}
{\Pr}_g(H, G) & = \frac{1}{|H|}\sum_{x \in H - Z(H, G)}\frac{1}{|\Cl_G(x)|}\\
& = \frac{1}{p}\left(1 - \frac{1}{|H : Z(H, G)|}\right). 
\end{align*}
This completes the proof.   
\end{proof}

Now we are ready to prove Theorem B.

\noindent {\it Proof of Theorem B.} 
Let $H$ be a subgroup of a finite nilpotent group $G$ and $p$ be a prime integer such that $|[G, H]| = p$. Let $P$ and $Q$ be the Sylow $p$-subgroups of $G$ and $H$. Then there exist Hall $p'$-subgroups $A$ and $B$ of $G$ and $H$ respectively such that $G = P \times A$ and 
$H = Q \times B$. Since $|[G, H]| = p$, it follows that $|[P, Q]| = p$ and $[A, B] = 1$. Therefore $[G, H] = [P, Q]$. Let $g \in [G, H] = [P, Q]$. Notice that $|\{(x, y) \in H \times G : [x, y] = g\}| = |A| |B| |\{(u, v) \in Q \times P : [u, v] = g\}|$. Hence
${\Pr}_g(H, G)  = {\Pr}_g(Q, P)$. Since $P$ is a $p$-group such that $|[P, Q]| = p$, hypothesis of Lemma \ref{Thm3.1} is satisfied. Hence
\[
{\Pr}_g(H, G)  = {\Pr}_g(Q, P) = \begin{cases} \frac{1}{p}\left(1 + \frac{p - 1}{|Q : Z(Q, P)|}\right) \text{ if } g = 1\\
\frac{1}{p}\left(1 - \frac{1}{|Q : Z(Q, P)|}\right) \text{ if } g \ne 1.
\end{cases}
\]
Notice that $|Q : Z(Q, P)| = |H : Z(H,G)|$. This completes the proof of the theorem.
\hfill $\Box$

\vspace{.3in}

We would like to remark that the hypothesis of Theorem B is naturally satisfied in many cases, namely (1) if $G$ is an extraspecial $p$-group and $H$ is any non-central subgroup of $G$; (2) if $G$ is a $p$-group of maximal class of order $p^n$ and $H = \gamma_{n-1}(G)$, where $n$ is a positive integer and $\gamma_{n-1}(G)$ denotes the $(n-1)$th term of the lower central series of $G$. More generally, if we take $G$ to be any finite $p$-group of nilpotency class $c$ such that $|\gamma_c(G)| = p$ and $H = \gamma_{c-1}(G)$, then $|[G, H]| = p$.

\section{Some more bounds for $\Pr(H, G)$}

For a given finite group $G$ and a subgroup $H$ of $G$ such that $Z(H, G) \neq H$, let $\cs(G, H)$ denote $\{|\Cl_G(x)| : x \in H\}$. Clearly, $\cs(G) := \cs(G, G)$ is the set of conjugacy class sizes of $G$. In the following theorem we give some bounds for $\Pr(H, G)$ in terms of the largest and smallest conjugacy class size of the elements of $H - Z(H, G)$. Let $s_H = \min \{|\Cl_G(x)| : x \in H - Z(H, G)\}$ and $l_H = \max \{|\Cl_G(x)| : x \in H - Z(H, G)\}$. 

\begin{Thm}\label{ThmUB}
If $H$ is a  subgroup of a finite group $G$ such that $Z(H, G) \ne H$  then
\[
\frac{1}{l_H}\left( 1 + \frac{l_H - 1}{|H : Z(H, G)|}\right) \leq \Pr(H, G) \leq \frac{1}{s_H}\left( 1 + \frac{s_H - 1}{|H : Z(H, G)|}\right)
\]
with equality if and only if $\cs(G, H) = \{1, s_H = l_H\}$.
\end{Thm}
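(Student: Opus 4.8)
The plan is to reuse the mechanism of the proof of Theorem A almost verbatim, replacing the single crude estimate $|\Cl_G(x)| \leq |K(G, H)|$ coming from \eqref{eq1} by the sharper two-sided bound that the definitions of $s_H$ and $l_H$ provide. The natural starting point is the identity
\[
\Pr(H, G) = \frac{1}{|H|}\left(|Z(H, G)| + \sum_{x \in H - Z(H, G)}\frac{1}{|\Cl_G(x)|}\right),
\]
which is exactly the second line of the displayed computation in the proof of Theorem A (obtained by putting $g = 1$ in Lemma \ref{eq2} and splitting off the central elements, each of which has $|\Cl_G(x)| = 1$).

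First I would note that, by the very definitions of $s_H$ and $l_H$, every $x \in H - Z(H, G)$ satisfies $s_H \leq |\Cl_G(x)| \leq l_H$, whence $\frac{1}{l_H} \leq \frac{1}{|\Cl_G(x)|} \leq \frac{1}{s_H}$. Since $H - Z(H, G)$ has precisely $|H| - |Z(H, G)|$ elements, summing these inequalities over $x$ gives
\[
\frac{|H| - |Z(H, G)|}{l_H} \leq \sum_{x \in H - Z(H, G)}\frac{1}{|\Cl_G(x)|} \leq \frac{|H| - |Z(H, G)|}{s_H}.
\]
Substituting into the identity above, dividing through by $|H|$, and using $|Z(H, G)|/|H| = 1/|H : Z(H, G)|$, a routine algebraic simplification collapses the two outer quantities into $\frac{1}{l_H}\left(1 + \frac{l_H - 1}{|H : Z(H, G)|}\right)$ and $\frac{1}{s_H}\left(1 + \frac{s_H - 1}{|H : Z(H, G)|}\right)$ respectively, establishing both displayed inequalities.

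For the equality clause, I would argue that the left summation inequality is an equality exactly when $|\Cl_G(x)| = l_H$ for \emph{every} $x \in H - Z(H, G)$, and the right one exactly when $|\Cl_G(x)| = s_H$ for every such $x$. If $s_H < l_H$, then by definition there are non-central elements of $H$ realising each of these two distinct sizes, so neither extremal bound is attained and both inequalities of the theorem are strict; conversely $s_H = l_H$ forces all non-central elements of $H$ to share this common class size, making the two bounds coincide with $\Pr(H, G)$. Finally, since $x \in H$ has $|\Cl_G(x)| = 1$ if and only if $x \in Z(G) \cap H = Z(H, G)$, the non-central class sizes are all $> 1$, so the condition $s_H = l_H$ is precisely the assertion $\cs(G, H) = \{1, s_H = l_H\}$. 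I do not expect any genuine obstacle here: the entire argument is a monotonicity estimate, and the only points deserving care are matching the simplified algebraic form to the statement and checking that $Z(H, G) \neq H$ guarantees the non-central block is nonempty so that $s_H$ and $l_H$ are well defined.
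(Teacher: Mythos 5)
Your proposal is correct and follows essentially the same route as the paper: apply Lemma \ref{eq2} with $g = 1$, split off the $|Z(H,G)|$ central terms, bound each remaining $1/|\Cl_G(x)|$ between $1/l_H$ and $1/s_H$, and simplify. The paper writes out only the upper bound and dismisses the lower bound and the equality clause in one line each, so your more explicit treatment of the equality case is, if anything, a slight improvement in completeness rather than a different argument.
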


\begin{proof}
By Lemma \ref{eq2} we have
\begin{align*}
\Pr(H, G) & = \frac{1}{|H|}\sum_{x \in H}\frac{1}{|\Cl_G(x)|}\\
& \leq \frac{1}{|H|}\left( |Z(H, G)| + \frac{|H| - |Z(H, G)|}{s_H}\right)\\
& = \frac{1}{s_H}\left( 1 + \frac{s_H - 1}{|H : Z(H, G)|}\right).
\end{align*}
The equality holds if and only if $\cs(G, H) = \{1, s_H\}$.
Similarly the other bound can be obtained. 
\end{proof}

If $p$ is the smallest prime dividing $|G|$ and $H$ a subgroup of $G$ then it has also been shown in \cite[Theorem 3.5]{ERL07} that
\[
\Pr(H, G) \leq \frac{1}{2}\left(1 + \frac{1}{|H : Z(H, G)|}\right). 
\]
However, Theorem 3.8 of \cite{DN10} gives
\[
\Pr(H, G) \leq \frac{1}{p}\left(1 + \frac{p - 1}{|H : Z(H, G)|}\right). 
\]
This bound is also obtained by Salemkar et. al  for any normal subgroup $H$ of $G$ (see \cite[Theorem 2.2(v)]{STM10}). 

By  Lemma \ref{newlemma}, we have, for any positive integer $n$ such that $2 \leq n \leq s_H$,
\[
\frac{1}{s_H}\left( 1 + \frac{s_H - 1}{|H : Z(H, G)|}\right) \leq \frac{1}{n}\left( 1 + \frac{n - 1}{|H : Z(H, G)|}\right)
\]
with equality if and only if $s_H = n$. 

Let $p$ be the smallest prime dividing $|G|$. Then notice that $p \le s_H$. Hence for $n = p$, we get 
\[
\frac{1}{s_H}\left( 1 + \frac{s_H - 1}{|H : Z(H, G)|}\right) \leq \frac{1}{p}\left( 1 + \frac{p - 1}{|H : Z(H, G)|}\right).
\]
{\it Which shows that the upper bound obtained in Theorem \ref{ThmUB} is an improvement on the known upper bounds for $\Pr(H, G)$}.

\vspace{.2in}

Putting $H = G$ in Theorem \ref{ThmUB}, we get the following result.

\begin{Cor}
Let $G$ be a finite non-abelian group. Then
\[
\frac{1}{l_G}\left( 1 + \frac{l_G - 1}{|G : Z(G)|}\right) \leq \Pr(G) \leq \frac{1}{s_G}\left( 1 + \frac{s_G - 1}{|G : Z(G)|}\right)
\]
with equality if and only if $\cs(G) = \{1, l_G = s_G\}$.
\end{Cor}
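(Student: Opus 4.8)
The plan is to deduce this Corollary directly from Theorem \ref{ThmUB} by specializing to the case $H = G$. First I would record the dictionary between the relative quantities appearing in Theorem \ref{ThmUB} and their absolute counterparts when $H$ is taken to be all of $G$. From the definition $Z(H, G) = Z(G) \cap H$ recorded in Section \ref{S:intro}, setting $H = G$ gives $Z(H, G) = Z(G, G) = Z(G)$, so that $|H : Z(H, G)| = |G : Z(G)|$. Likewise $\cs(G, H) = \cs(G, G) = \cs(G)$, and the extremal quantities $s_H$ and $l_H$ (the minimum and maximum of $|\Cl_G(x)|$ over $x \in H - Z(H, G)$) specialize to $s_G$ and $l_G$, the minimum and maximum conjugacy class sizes over the noncentral elements $x \in G - Z(G)$.

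Next I would check that the hypothesis of Theorem \ref{ThmUB}, namely $Z(H, G) \ne H$, translates into the hypothesis of the Corollary. With $H = G$ this reads $Z(G) \ne G$, which is precisely the assertion that $G$ is non-abelian. Thus requiring $G$ to be a finite non-abelian group is exactly what is needed in order to invoke Theorem \ref{ThmUB} in this setting.

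With these identifications in hand, substituting $H = G$ into the two-sided inequality of Theorem \ref{ThmUB} yields
\[
\frac{1}{l_G}\left( 1 + \frac{l_G - 1}{|G : Z(G)|}\right) \leq \Pr(G) \leq \frac{1}{s_G}\left( 1 + \frac{s_G - 1}{|G : Z(G)|}\right),
\]
while the equality clause $\cs(G, H) = \{1, s_H = l_H\}$ of Theorem \ref{ThmUB} becomes $\cs(G) = \{1, l_G = s_G\}$, exactly as asserted in the Corollary.

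Since the Corollary is a pure specialization, I do not anticipate any genuine obstacle; the only point requiring care is the bookkeeping in the first step, verifying that each relative invariant reduces to the corresponding absolute one under $H = G$, and in particular that the single hypothesis $Z(H, G) \ne H$ is equivalent to non-abelianness of $G$. No new estimate or construction is needed beyond what Theorem \ref{ThmUB} already supplies.
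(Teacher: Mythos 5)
Your proposal is correct and matches the paper exactly: the paper obtains this corollary by putting $H = G$ in Theorem \ref{ThmUB}, which is precisely the specialization you carry out. The bookkeeping you spell out (that $Z(G,G) = Z(G)$, that $s_H, l_H$ become $s_G, l_G$, and that $Z(H,G) \ne H$ becomes non-abelianness) is all that is needed.
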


We conclude this section with the analogus bounds in terms of smallest and largest character degrees.
Let $\Irr(G)$ denote the set of  complex irreducible  characters of $G$ and $\cd(G) = \{\chi(1) : \chi \in \Irr(G)\}$. In the last decades a number of results have been proved concerning $\cd(G)$. Let $d$ and $m$, respectively denote the smallest and largest degree of  non-linear complex irreducible  characters of $G$.
There are some bounds for $\Pr(G)$ given in \cite[Lemma 2(vi)]{GR06} in terms of the smallest degree of  non-linear complex irreducible  characters of $G$. Precisely, if $G$ is non-abelian then 
\[
\frac{1}{|G'|} < \Pr(G) \leq \frac{1}{|G'|}\left( 1 + \frac{|G'| - 1}{d^2}\right).
\]
The equality holds in the right most inequality if and only if $\cd(G) = \{1, d = m\}$. The lower bound here can be improved by replacing $|G:Z(G)|$ by $m^2$ in the proof of \cite[Theorem 1]{ND10}, which is given in the following equation.

\begin{equation}\label{degbd}
\frac{1}{|G'|}\left( 1 + \frac{|G'| - 1}{m^2}\right) \leq \Pr(G).
\end{equation}
The equality holds in \eqref{degbd} if and only if $\cd(G) = \{1, m = d\}$. Since $\chi(1)^2 \leq |G : Z(G)|$ for all $\chi \in \Irr(G)$ we have the lower bound obtained in \eqref{degbd} is better than \eqref{lbd}.

\section{Some observations on class sizes}

The study of the structure of a finite group $G$ by imposing conditions on the set $\cs(G)$, the set of conjugacy class sizes of $G$, has been studied by many authors in the litrature. Recently A. Camina and R. D. Camina \cite{Camina} gave an impressive survey on this topic. The following questions, which we state in our language, were posed in Section 3.2 of \cite{Camina}:

\noindent {\bf Question.} Let $G$ be a finite group.
\begin{enumerate}
 \item[1.] If all  conjugacy class sizes of $G$, including their multiplicities are known, then what can be said about the solvability of $G$?
 \item[2.] If all  conjugacy class sizes of $G$  are known, then what can be said about the solvability of $G$?
\end{enumerate}

Let $A_n$ denote the alternating group of degree $n$. In 2006, R. M. Guralnick and G. R. Robinson \cite{GR06} proved the following  result.

\begin{Thm}[Theorem 11]\label{ThmGR}
Let $G$ be a finite group such that $\Pr(G) > 3/40$. Then either $G$ is solvable, or else $G \cong A_5 \times T$ for some abelian group $T$, in which case $\Pr(G) = 1/12$.
\end{Thm}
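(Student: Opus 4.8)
The plan is to reduce the statement to a finite, essentially computational problem by exploiting the hypothesis $\Pr(G) > 3/40$ as a strong constraint on the structure of $G$. The key quantitative tool is the standard identity $\Pr(G) = k(G)/|G|$, where $k(G)$ is the number of conjugacy classes of $G$, together with the multiplicative behaviour under direct products recorded in equation \eqref{prod}. First I would treat the case where $G$ is nonsolvable. By a theorem of Thompson, a minimal nonsolvable group has a composition factor isomorphic to one of the known simple groups, so I would use the fact that for a nonabelian simple group $S$ the commuting probability $\Pr(S)$ is extremely small, with $A_5$ giving the largest value $\Pr(A_5) = k(A_5)/|A_5| = 5/60 = 1/12$ among the small simple groups. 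The essential number-theoretic input is that the bound $3/40 < 1/12$ already isolates $A_5$ as the \emph{unique} candidate for a nonabelian composition factor meeting the threshold.

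The central step is a monotonicity argument: if $N \trianglelefteq G$ then $\Pr(G) \le \Pr(G/N)$ and $\Pr(G) \le \Pr(N)$, both of which follow from counting commuting pairs (each commuting pair in $G$ projects to a commuting pair in $G/N$, and the fibres over the identity relate $\Pr(G)$ to $\Pr(N)$). Using this, I would argue that a nonsolvable $G$ with $\Pr(G) > 3/40$ must have $G/\mathrm{rad}(G)$, its quotient by the solvable radical, essentially built from copies of $A_5$; since $\Pr(A_5 \times A_5) = (1/12)^2 = 1/144 < 3/40$, at most one copy of $A_5$ can appear. I would then show the solvable radical must be central, so that $G$ is a central extension, and a further commuting-probability estimate forces $G \cong A_5 \times T$ with $T$ abelian, at which point $\Pr(G) = \Pr(A_5)\Pr(T) = (1/12)\cdot 1 = 1/12$ by \eqref{prod}.

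The main obstacle will be handling the almost simple and central-extension cases cleanly: one must rule out that $A_5$ appears inside $G$ with a nontrivial, noncentral solvable piece glued on, since such extensions could in principle nudge $k(G)/|G|$ upward. To control this I would invoke the precise values $\Pr(S_5) = 7/120$ and $\Pr(\mathrm{SL}_2(5)) = 9/120 = 3/40$, and verify that every almost simple or quasisimple group involving $A_5$ has commuting probability at most $3/40$, with equality only in degenerate situations that are excluded by the strict inequality in the hypothesis. The delicate point is that the bound $3/40$ is achieved by $\mathrm{SL}_2(5)$, so the \emph{strictness} of $\Pr(G) > 3/40$ is doing real work: it is exactly what eliminates the quasisimple cover $\mathrm{SL}_2(5)$ and thereby forces the radical to split off as a direct central factor. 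Once these finitely many groups are checked, the classification collapses to the two stated alternatives.
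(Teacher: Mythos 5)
First, a point of order: the paper does not prove this statement. It is Theorem~11 of Guralnick and Robinson \cite{GR06}, quoted verbatim in Section~5 and used as a black box; there is no proof in the present paper to compare yours against. So I can only assess your sketch against the actual argument in \cite{GR06}.

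Your outline does capture the genuine strategy of that proof: the identity $\Pr(G)=k(G)/|G|$, the submultiplicativity $\Pr(G)\le \Pr(N)\Pr(G/N)$ for $N\trianglelefteq G$ (which follows from Gallagher's inequality $k(G)\le k(N)k(G/N)$, not just from the projection argument you give, which only yields $\Pr(G)\le\Pr(G/N)$), the values $\Pr(A_5)=1/12$, $\Pr(S_5)=7/120$, $\Pr(\mathrm{SL}_2(5))=3/40$, and the observation that $\mathrm{SL}_2(5)$ is the extremal configuration explaining the constant $3/40$. All of this is correct. But the three steps that constitute the actual content are asserted rather than proved. (1) The claim that $A_5$ is the unique nonabelian simple group $S$ with $k(S)/|S|>3/40$, and that every almost simple group above any simple group fails the bound, is a classification-dependent survey requiring uniform upper bounds on $k(S)$; it is not a consequence of Thompson's theorem, which you invoke where nothing beyond the definition of solvability is needed. (2) The passage from ``$G/\mathrm{rad}(G)$ involves one copy of $A_5$'' to ``$G/\mathrm{rad}(G)\cong A_5$'' needs the structure of groups with trivial solvable radical ($E(G)=F^*(G)$ a product of simple groups and $G$ embedding in its automorphism group) together with the almost-simple computation. (3) The step ``the solvable radical $R$ must be central'' is the delicate one and you give no argument for it: the quantitative route $\Pr(R)\ge\Pr(G)/\Pr(G/R)>(3/40)\cdot 12=9/10>5/8$ forces $R$ abelian, but one must still rule out a faithful (hence noncentral) action of $A_5$ on $R$ before the Schur-multiplier/$\mathrm{SL}_2(5)$ argument splits off the direct factor. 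As it stands the proposal is a correct road map for the proof in \cite{GR06}, not a proof.
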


Again in 2006, F. Barry et. al \cite{bM06} proved the following result.

\begin{Thm}[Theorem 4.10]\label{ThmBM1}
Let $G$ be a finite group such that $\Pr(G) > 1/3$. Then $G$ is supersolvable.
\end{Thm}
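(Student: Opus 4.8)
The plan is to argue by contradiction, splitting off the non-solvable case with Theorem~\ref{ThmGR} and then running an induction on $|G|$ for solvable groups. First I would dispose of the non-solvable groups: since $1/3 > 3/40$, any $G$ with $\Pr(G) > 1/3$ satisfies the hypothesis of Theorem~\ref{ThmGR}, so either $G$ is solvable or $G \cong A_5 \times T$ with $T$ abelian. In the latter case $\Pr(G) = \Pr(A_5)\Pr(T) = 1/12$ by \eqref{prod}, contradicting $\Pr(G) > 1/3$. Hence it suffices to treat solvable groups, and I would take $G$ to be a solvable, non-supersolvable group of least order with $\Pr(G) > 1/3$.

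Next I would extract a minimal normal subgroup $N$ of $G$; as $G$ is solvable, $N$ is elementary abelian of order $p^k$ for some prime $p$. Gallagher's inequality $k(G) \le k(N)\,k(G/N)$ gives $\Pr(G) \le \Pr(N)\Pr(G/N) = \Pr(G/N)$, since $N$ is abelian, so $\Pr(G/N) > 1/3$. As $G/N$ is solvable of smaller order, minimality forces $G/N$ to be supersolvable. I would then observe that $N$ cannot be cyclic: if $|N| = p$, a chief series of $G/N$ with cyclic factors normal in $G/N$ lifts through $N$ to such a series of $G$, making $G$ supersolvable. Therefore $k \ge 2$, and since a minimal normal subgroup admits no proper nonzero $G$-invariant subgroup, $N$ is an irreducible $\mathbb{F}_p[G]$-module of dimension $\ge 2$; in particular $N$ is noncentral and has no nonzero $G$-fixed vector, so every $G$-orbit on $N \setminus \{1\}$ has size $> 1$.

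The heart of the argument, and the step I expect to be the main obstacle, is to convert this structure into the bound $\Pr(G) \le 1/3$, contradicting the hypothesis. Writing $C = C_G(N) \supseteq N$, the group $G/C$ is a quotient of the supersolvable group $G/N$, hence supersolvable, and it acts faithfully and irreducibly on the noncyclic module $N$. I would count the conjugacy classes of $G$ by separating those contained in $N$ (the $G$-orbits on $N$) from the remaining classes (which are controlled by the classes of $G/N$), and show that the absence of nonzero fixed vectors forces the orbits on $N \setminus \{1\}$ to be large enough that $k(G) \le |G|/3$. The delicate point is that the inequality is sharp: for $A_4$ one has $N \cong V_4$, $G/N \cong C_3$ acting with a single regular orbit on $N \setminus \{1\}$, and $\Pr(A_4) = 4/12 = 1/3$. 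Thus the final estimate must be tight enough to exclude exactly this boundary configuration, which I would do by invoking the restrictions that a faithful irreducible action of a supersolvable group on $\mathbb{F}_p^k$ with $k \ge 2$ places on the orbit sizes and on $|G/C|$.
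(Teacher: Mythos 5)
First, a point of comparison: the paper does not prove this statement at all. Theorem \ref{ThmBM1} is quoted from Barry et al.\ \cite{bM06} (their Theorem 4.10) and used as a black box, so there is no internal proof to measure your attempt against; I can only judge it on its own terms.

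On those terms, your reductions are correct but routine, and the single step that carries the entire content of the theorem is missing. The use of Theorem \ref{ThmGR} together with \eqref{prod} to eliminate the non-solvable case is fine; so is $\Pr(G)\le\Pr(G/N)$ via $k(G)\le k(N)\,k(G/N)$, and so is the conclusion that a minimal counterexample is solvable with a noncyclic minimal normal elementary abelian $N$ satisfying $N\cap Z(G)=1$ and $G/N$ supersolvable. But at the decisive moment --- showing that this configuration forces $\Pr(G)\le 1/3$ --- you only describe what you \emph{would} do: count the $G$-classes inside $N$ and the classes outside $N$, and invoke unspecified ``restrictions'' on faithful irreducible supersolvable actions. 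No inequality is derived, and the estimates you have actually set up do not suffice. The orbit count bounds the number of $G$-classes contained in $N$ by $(|N|+1)/2$, but for classes outside $N$ the only tool in place is $k(G)\le k(N)\,k(G/N)$, which returns $\Pr(G)\le\Pr(G/N)$ and nothing sharper; to get below $1/3$ you need a new estimate relating $|C_G(g)|$ for $g\notin N$ to the centralizer in $G/N$ and to the action on $N$, and you must deal with the case $C_G(N)\supsetneq N$. You yourself exhibit $A_4$ as a group meeting every structural constraint you have derived while achieving $\Pr(G)=1/3$ exactly, so the needed inequality must be sharp and must isolate precisely that case; observing that the estimate ``must be tight enough'' names the difficulty rather than resolving it. As written, the proposal is a correct reduction to an unproved core, and the core is where the theorem lives.
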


\begin{Thm}[Theorem 4.12]\label{ThmBM2}
Let $G$ be a finite group of odd order such that $\Pr(G) > 11/75$. Then $G$ is supersolvable.
\end{Thm}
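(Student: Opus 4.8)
The plan is to prove the contrapositive: if $G$ has odd order and is \emph{not} supersolvable, then $\Pr(G) \le 11/75$. Throughout write $k(X)$ for the number of conjugacy classes of a finite group $X$, so that $\Pr(X) = k(X)/|X|$. Two standard facts drive the argument. By the Feit--Thompson theorem every group of odd order is solvable, hence so is every section of $G$; and Gallagher's inequality $\Pr(X) \le \Pr(X/N)$ for every $N \trianglelefteq X$ lets me replace $G$ by a convenient quotient without decreasing the commuting probability. I would also invoke the classical fact that a finite group is supersolvable if and only if all of its maximal subgroups have prime index.

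First I would reduce to a primitive group. Since $G$ is not supersolvable, it has a maximal subgroup $M$ with $[G:M]$ composite. Put $K = \bigcap_{g\in G} M^g$ and $\bar G = G/K$. Then $M/K$ is a core-free maximal subgroup of $\bar G$ of composite index, so $\bar G$ acts faithfully and primitively on its cosets and is still non-supersolvable, while $\Pr(G) \le \Pr(\bar G)$ by Gallagher. As $\bar G$ is a \emph{solvable} primitive permutation group, it is of affine type: $\bar G = V \rtimes H$, where $V \cong \mathbb{F}_p^{\,d}$ is the unique minimal normal subgroup (acting regularly) and $H \le \mathrm{GL}(d,p)$ acts faithfully and irreducibly on $V$. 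Here $|V| = p^d = [G:M]$ is composite, forcing $d \ge 2$, and $|H|$ is odd. It therefore suffices to bound $\Pr(V \rtimes H)$ for all such configurations.

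Next I would compute $k(\bar G)$ by Clifford theory applied to the abelian normal subgroup $V$. Every irreducible character of $\bar G$ lies over some $\lambda$ in the dual module $\hat V$, and one obtains
\[
k(\bar G) \;=\; \sum_{\mathcal O \in \hat V / H} k\bigl(H_{\lambda_{\mathcal O}}\bigr),
\]
the sum over $H$-orbits $\mathcal O$ on $\hat V$, with $H_{\lambda}$ the stabiliser of a representative $\lambda$. The trivial orbit contributes $k(H)$ (the classes of the complement), and each nontrivial orbit contributes $k(H_\lambda) \le |H_\lambda|$. In the Frobenius case, where $H$ acts freely on $\hat V \setminus \{1\}$, all nontrivial stabilisers are trivial and this collapses to $k(\bar G) = k(H) + (|V|-1)/|H|$. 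Dividing by $|\bar G| = p^d|H|$ gives, when moreover $H$ is cyclic of order $h$,
\[
\Pr(\bar G) \;=\; \frac{1}{p^d h}\left(h + \frac{p^d-1}{h}\right) \;=\; \frac{1}{p^d} + \frac{p^d-1}{p^d\,h^2},
\]
subject to $h \mid p^d-1$, $h$ odd, and irreducibility with $d \ge 2$. This is decreasing in $h$, and with $h$ at its least admissible value $3$ it equals $\frac19 + \frac{8}{9p^d}$, which is decreasing in $p^d$; the smallest admissible pair is $(p^d,h)=(25,3)$, realised by $V \cong \mathbb{F}_5^2$ with $H = C_3$ acting irreducibly (a primitive cube root of unity lies in $\mathbb{F}_{25}\setminus\mathbb{F}_5$ since $3 \nmid 4$). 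This yields exactly $\Pr = (9+24)/225 = 11/75$, attained by the Frobenius group of order $75$.

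The remaining task is to show that every other configuration gives a strictly smaller value, and this is the step I expect to be the main obstacle. One must dispose of (i) non-cyclic odd complements $H$, and (ii) the genuinely non-Frobenius irreducible actions, where the nontrivial stabilisers $H_\lambda$ contribute to $k(\bar G)$. For these I would feed the crude estimate $k(\bar G) \le k(H) + \sum_{\mathcal O \ne \{1\}} |H_{\lambda_{\mathcal O}}|$ into $\Pr(\bar G) = k(\bar G)/(p^d|H|)$, control the stabiliser sum via the orbit structure (small orbits force large stabilisers but also many orbits, both of which are limited once $H$ is faithful and irreducible with $d \ge 2$), and use that such an $H$ is necessarily large relative to $p^d$. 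The delicate point is keeping these bounds uniform across all admissible $(p,d,H)$ so that the resulting value stays below $11/75$ in every case except the order-$75$ group, thereby pinning the extremum exactly where the sharp constant $11/75$ demands.
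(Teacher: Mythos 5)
First, a point of order: the paper does not prove this statement. Theorem \ref{ThmBM2} is quoted verbatim as Theorem 4.12 of Barry, MacHale and N\'{\i} Sh\'e \cite{bM06} and is used purely as a black box in Section 5, so there is no in-paper proof to compare your argument against; your proposal has to stand or fall on its own.

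On its own terms, your reduction is sound: Huppert's criterion gives a maximal subgroup of composite index, Gallagher's inequality $\Pr(G)\le\Pr(G/N)$ passes to the primitive quotient, and Feit--Thompson makes that quotient a solvable affine primitive group $V\rtimes H$ with $|V|=p^d$, $d\ge 2$, and $|H|$ odd and acting faithfully and irreducibly. The Clifford-theoretic count $k(V\rtimes H)=\sum_{\mathcal O}k(H_{\lambda_{\mathcal O}})$ is correct, and you have correctly located the extremal group $(\mathbb{Z}_5\times\mathbb{Z}_5)\rtimes C_3$ of order $75$ with $\Pr=11/75$. But the argument stops exactly where the theorem begins: you must show that \emph{every} admissible $(p,d,H)$ other than this one yields $\Pr(V\rtimes H)\le 11/75$, and you explicitly concede that you do not know how to do this uniformly. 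The crude estimate $k(H_\lambda)\le|H_\lambda|$ is not by itself sufficient: a faithful irreducible odd $H$ can a priori have many short orbits on $\hat V$ with large stabilizers, and one needs genuine input --- for instance, that irreducibility kills all nontrivial fixed points so every nontrivial orbit has length at least $3$, together with lower bounds on $|H|$ relative to $p^d$ or a finite case check over small $p^d$ --- to push the sum below the sharp constant $11/75$ in all remaining cases. Until that quantitative step is carried out, what you have is a correct identification of the extremal configuration together with a plan of attack, not a proof.
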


As a consequence of the preceding theorems, we observe the following information regarding above questions:

\noindent{\bf Regarding Question 1.}
Suppose that all conjugacy class sizes of $G$, including their multiplicities are known. Then $\Pr(G)$ can be computed in variuos ways, e.g., by putting $g = 1$ and $H = G$ in Lemma \ref{eq2}. Then Theorems \ref{ThmGR}, \ref{ThmBM1} and \ref{ThmBM2} provide interesting information on the solvability and supersolvability of the group $G$.

\noindent{\bf Regarding Question 2.}  Following N. Ito \cite{nI53}, we define conjugate type vector of a finite groups $G$ by $(1, n_1, n_2, \dots, n_r)$, where $1 < n_1 < n_2 \cdots < n_r$. Also the conjugate rank of $G$, which is denoted by $\crk(G)$, is given by $r$. By putting $g = 1$ and $H = G$ in Lemma \ref{eq2}, we get 

\begin{align}\label{eqn3}
\Pr(G) &= \frac{1}{|G|} \underset{x \in G}{\sum} \frac{1}{|\Cl_G(x)|}\\  
 & \ge  \frac{1}{|G|}\left(|Z(G)| + \crk(G) + \frac{|G| - |Z(G)| - \sum_{i=1}^{r} n_i}{n_r}\right).\nonumber
\end{align}

Given the center and the conjugate type vector of a group $G$ of given order, one can compute $\frac{1}{|G|}(|Z(G)| + \crk(G) + \frac{|G| - |Z(G)| - \sum_{i=1}^{r} n_i}{n_r})$. Hence using \eqref{eqn3}, Theorems \ref{ThmGR}, \ref{ThmBM1} and \ref{ThmBM2} provide interesting information on the solvability and supersolvability of the group $G$.

\noindent\textbf{Acknowledgment.}
The authors are grateful to Dr. A. K. Das for some useful comments on an earlier version of this paper.

\end{document}